\documentclass[12pt]{article}


\usepackage{graphicx,url}
\usepackage{amsmath,amsthm,amssymb}
\usepackage[noadjust,sort]{cite}

\usepackage[colorlinks=true,citecolor=black,linkcolor=black,urlcolor=blue]{hyperref}

\allowdisplaybreaks

\makeatletter
\g@addto@macro\normalsize{%
  \setlength\abovedisplayskip{8pt plus 3pt minus 3pt}
  \setlength\belowdisplayskip{8pt plus 3pt minus 3pt}
  \setlength\abovedisplayshortskip{6pt plus 3pt minus 2pt}
  \setlength\belowdisplayshortskip{6pt plus 3pt minus 2pt}
}
\makeatother

\date{\today}

\setlength{\textwidth}{16cm}
\setlength{\textheight}{22cm}
\setlength{\topmargin}{0pt}
\setlength{\headsep}{0pt}
\setlength{\headheight}{0pt}
\setlength{\oddsidemargin}{0pt}
\setlength{\evensidemargin}{0pt}
\setlength{\parskip}{0.5ex}
\normalsize

\numberwithin{equation}{section}

\newtheorem{thm}{Theorem}[section]

\newtheorem{lemma}[thm]{Lemma}

\def\deck{\mathcal{ID}}
\def\G{\mathcal{G}}
\def\C{\mathcal{C}}

\def\dfrac#1#2{\lower0.15ex\hbox{\large$\textstyle\frac{#1}{#2}$}}
\def\({\bigl(}
\def\){\bigr)}
\def\st{\mathrel{|}}
\def\({\bigl(}
\def\){\bigr)}

\def\card#1{\mathopen|#1\mathclose|}


\def\Naturals{{\mathbb{N}}}

\def\Aut{\operatorname{Aut}}

\def\nicebreak{\vskip 0pt plus 50pt\penalty-300\vskip 0pt plus -50pt }
\let\originalleft\left
\let\originalright\right
\renewcommand{\left}{\mathopen{}\mathclose\bgroup\originalleft}
\renewcommand{\right}{\aftergroup\egroup\originalright}

\begin{document}

\title{Reconstruction of small graphs and digraphs}

\author{
Brendan D. McKay\\
\small School of Computing\\[-0.8ex]
\small Australian National University\\[-0.8ex]
\small Canberra, ACT 2601, Australia\\
\small\tt brendan.mckay@anu.edu.au
}

\date{VERSION 4}

\maketitle

\begin{abstract}
We describe computer searches that prove the graph reconstruction
conjecture for graphs with up to 13 vertices and some limited classes
on larger sizes.
We also investigate the reconstructibility of tournaments up to 13 vertices,
digraphs up to 9 vertices, and posets up to 13 points.
In all cases, our results also apply to the set reconstruction problem that uses
the isomorph-reduced deck. 
\end{abstract}

\nicebreak

\section{Introduction }\label{s:intro}

The late Frank Harary visited the University of Melbourne in 1976,
when I was a Masters student there.
I mentioned to him that I had proved the 9-point graphs to be
reconstructible, using a catalogue of graphs obtained on
magnetic tape from Canada~\cite{Baker74}.
Harary replied, ``Send it to the Journal of Graph Theory.
I accept it!''. Impressed by having a paper I hadn't written yet
accepted for a journal that hadn't started publishing yet, I quickly
wrote it up and did as Harary suggested~\cite{McKay77}.

We recount the standard definitions. Except when we
say otherwise, our graphs are simple, undirected
and labelled, and our digraphs are simple and unlabelled.
We use the standard sloppy terminology that an
\textit{unlabelled graph} is the isomorphism type of a graph,
or a labelled graph with the labels hidden.
For an unlabelled graph $G$ with vertex $v$, the \textit{card} $G{-}v$ is
the unlabelled graph obtained from~$G$ by removing~$v$.
The (full) \textit{deck} of~$G$ is the multiset of its cards. The celebrated
\textbf{Kelly-Ulam reconstruction conjecture} says that
$G$ can be uniquely determined from its deck if $G$ has
at least 3 vertices.

We will focus on a stronger version of the conjecture due to
Harary~\cite{Harary64}, since it is no more onerous for the computer.
Define $\deck(G)$, called the \textit{reduced deck} of~$G$,
to be the \textit{set} of cards of~$G$. That is, $\deck(G)$
tells us which unlabelled graphs appear in the
deck, but not how many of each there are.  The
\textbf{set reconstruction conjecture} is that $G$ is uniquely
determined by $\deck(G)$ if it has at least 4 vertices.

Many surveys of the reconstruction conjecture have been written;
see Lauri~\cite{Lauri13} for a fairly recent one.

Twenty years after investigating the 9-point graphs, I extended
the search to 11 vertices~\cite{McKay97}.  Since the number of
graphs on 11 vertices is 1,018,997,864, a completely new
computational method was required.  Now more than another
twenty years have past, so it is time for a further extension and that
is the purpose of this project.  Despite having more and faster
computers, it is sobering that the number of graphs on 13 vertices
is 50,502,031,367,952.  Our method will be similar to~\cite{McKay97}
but with some improvements to make the task less onerous.
The weaker edge-reconstruction conjecture, which we otherwise
will not consider, was meanwhile
checked up to 12 vertices by Stolee~\cite{Stolee11}.

\section{The method}\label{s:Graphs}

If $X$ is a structure built from $\{1,2,\ldots,n\}$, and $\phi\in S_n$
where $S_n$ is the symmetric group on $\{1,2,\ldots,n\}$, then
$X^\phi$ is obtained from~$X$ by replacing each $i$ by $i^\phi$.  For example,
if $G$ is a graph with vertices $\{1,2,\ldots,n\}$, then the graph
$G^\phi$ has an edge $i^\phi j^\phi$ for each edge $ij$ of~$G$.
The automorphism group of $G$ is 
$\Aut(G)=\{ \phi\in S_n \st G^\phi = G \}$ where ``$=$'' denotes
equality not isomorphism.

Let $\C$ be a non-empty class of labelled graphs that is closed under
isomorphism and taking
induced subgraphs, and let $\C_n$ be the subset of~$\C$ containing those
with~$n$ vertices.  Clearly $\C_1=\{K_1\}$.
We will assume that the vertices of $G\in\C_n$ are $\{1,2,\ldots,n\}$.
The special case that $\C$ contains all graphs will be denoted by~$\G$
and similarly~$\G_n$.

For $G\in\G$ and $W\subseteq V(G)$, let $G[W]v$ denote the graph
obtained from~$G$ by appending a new vertex $v$ and joining it to each of the
vertices in~$W$.
Define $\preceq$ to be a  preorder (reflexive, transitive order)
on labelled graphs, invariant under isomorphism.
(An example would that $G_1\preceq G_2$ iff $G_1$ has at most
as many edges as~$G_2$.)
Consider the following possible properties of a function $m:\G\to 2^\Naturals$.
\begin{itemize}\itemsep=0pt
  \item[(A)] For each $H\in\G$, $m(H)$ is an orbit of $\Aut(H)$.
  \item[(A$'$)] For each $H\in\G$, $m(H)$ is the union of all the orbits
    of $\Aut(H)$ such that the cards $H{-}v$ for $v\in m(H)$
    are maximal under $\preceq$ amongst all cards of~$H$.
  \item[(B)] For each $H\in\G_n$ and $\phi\in S_n$, $m(H^\phi)=m(H)^\phi$.
\end{itemize}

\begin{figure}
\begin{tabbing}
xxxx\=xxx\=xxx\=xxx\=xxx\=xxx\=xxx\=x\kill
\>\textbf{algorithm} \texttt{generate}($G$ : labelled graph; $n$ : integer)\\
\>\>\textbf{if} $\card{V(G)}=n$ \textbf{then}\\
\>\>\>\textbf{output} $G$\\
\>\>\textbf{else}\\
\>\>\>\textbf{for} each orbit $A$ of the action of $\Aut(G)$ on $2^{V(G)}$ \textbf{do}\\
\>\>\>\>select any $W\in A$ and form $H:=G[W]v$\\
\>\>\>\>\textbf{if} $H\in\G$ \textbf{and} $v\in m(H)$ \textbf{then}\\
\>\>\>\>\>\texttt{generate}$(H,n)$\\
\>\>\>\>\textbf{endif}\\
\>\>\>\textbf{endfor}\\
\>\>\textbf{endif}\\
\>\textbf{end} \texttt{generate}
\end{tabbing}
\caption{Generation algorithm\label{fig:generate}}
\end{figure}

Now consider the algorithm shown in Figure~\ref{fig:generate}.
When we form $H:=G[W]v$ in the inner loop, we say that $G$
is a \textit{parent} of~$H$ and $H$ is a \textit{child} of~$G$.

We have the following theorem.

\begin{thm}\label{generate}
Let $\C$ be a non-empty class of labelled graphs that is closed under
isomorphism and taking  induced subgraphs.  Then:
\begin{itemize}\itemsep=0pt
 \item[(a)] If $m:\G\to 2^\Naturals$ satisfies (A) and (B), then
    calling \hbox{\emph{\texttt{generate}}}$(K_1,n)$ will cause output of
    exactly one member of each isomorphism class of $\C_n$.
 \item[(b)] Suppose $m:\G\to 2^\Naturals$ satisfies (A) and (B) for
   $\card{V(H)}<n$ and (A$'$) for $\card{V(H)}=n$. Let $G_1,G_2$
   be non-isomorphic members of $\C_n$ with the same reduced
   deck.  Then calling \hbox{\emph{\texttt{generate}}}$(K_1,n)$
   will cause $G_1$ and $G_2$ to be output as children of the same
   non-empty set of parents.
\end{itemize}
\end{thm}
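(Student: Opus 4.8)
\medskip
\noindent\textbf{Proof plan.}\ For part~(a) I would run the standard isomorph-free (``orderly'') generation argument. Note first that \texttt{generate} maintains the convention that a depth-$k$ node has vertex set $\{1,\dots,k\}$, so the vertex it appends to a depth-$(k{-}1)$ node is $k$. Since $\C_1=\{K_1\}$ and the inner loop recurses into $H$ only when $H\in\C$, an induction on depth shows every graph reaching the \textbf{output} statement lies in $\C_n$. I would then prove, by induction on $k$, that \texttt{generate}$(K_1,n)$ produces, at depth $k$, exactly one recursion-tree node in each isomorphism class of $\C_k$; the case $k=n$ is~(a). For existence: given $H\in\C_k$, property~(A) makes $m(H)$ a nonempty orbit, so fix $u\in m(H)$; by induction some depth-$(k{-}1)$ node $G$ is isomorphic to $H-u$, say via $\sigma:H-u\to G$; in \texttt{generate}$(G,n)$ the selected representative $W$ of the $\Aut(G)$-orbit of $\sigma(N_H(u))$ gives $G[W]k\cong H$, which lies in $\C$ (isomorphism-closure) and, by~(B), has $k\in m(G[W]k)$, so it is recursed into. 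For uniqueness: if depth-$k$ nodes $G_1\cong G_2$ both occur with parents $P_1,P_2$ and $G_i=P_i[W_i]k$, then by~(A) and~(B) any isomorphism $G_1\to G_2$ can be post-composed with an automorphism of $G_2$ so as to fix the vertex $k$; it then restricts to an isomorphism $P_1\to P_2$, so $P_1=P_2$ by induction, and it carries $W_1$ to $W_2$; since \texttt{generate} picks one set per orbit, $W_1=W_2$, hence $G_1=G_2$.

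For part~(b), the first observation is that (A$'$) defines $m(H)$ to be the set of $v\in V(H)$ for which the card $H-v$ is $\preceq$-maximal among all cards of $H$, and this description is isomorphism-invariant --- because $\preceq$ is, and because the family of cards is merely permuted by a relabelling. Hence $m$ automatically satisfies~(B) at level $n$ as well, so the \emph{existence} half of part~(a) still applies verbatim and some labelled copy of $G_1$ and of $G_2$ is output; moreover $m(H)\neq\emptyset$ since a finite nonempty set has a $\preceq$-maximal element. The point of (A$'$) is that $m(H)$ need no longer be a single orbit, so the \emph{uniqueness} half of~(a) fails --- on purpose --- and I would instead pin down the parents of the outputs exactly. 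By part~(a) applied with target $n-1$ (legitimate, as the recursion down to depth $n-1$ is identical for targets $n-1$ and $n$, and $m$ satisfies (A),(B) below level $n$), each isomorphism class $\lambda$ of $\C_{n-1}$ is the class of a unique depth-$(n{-}1)$ node $Q_\lambda$.

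Now fix $H\in\C_n$. If $H'\cong H$ is output, its unique parent $P$ is a depth-$(n{-}1)$ node, namely $H'-n$; so $P=Q_{[P]}$ and $[P]=[H'-n]$ is a card class of $H$; and $H'$ was recursed into only because $n\in m(H')$, which by (A$'$) forces $H'-n$, hence $[P]$, to be $\preceq$-maximal among the card classes of $H$. Conversely, for each $\preceq$-maximal card class $\lambda$ of $H$, pick a vertex $w$ of $H$ with $[H-w]=\lambda$ and an isomorphism $\theta:H-w\to Q_\lambda$; in the run of \texttt{generate}$(Q_\lambda,n)$ the selected representative $W$ of the $\Aut(Q_\lambda)$-orbit of $\theta(N_H(w))$ satisfies $Q_\lambda[W]n\cong H$ (so it lies in $\C$), and since $Q_\lambda[W]n-n=Q_\lambda$ is a $\preceq$-maximal card of it, $n\in m(Q_\lambda[W]n)$; hence $Q_\lambda[W]n$ is recursed into and, having $n$ vertices, output, with parent $Q_\lambda$. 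Therefore the set of parents occurring at outputs isomorphic to $H$ is exactly the set of $Q_\lambda$ with $\lambda$ ranging over the $\preceq$-maximal card classes of $H$. Applying this with $H=G_1$ and with $H=G_2$, and using that $G_1,G_2$ have the same reduced deck (hence the same set of card classes, hence --- $\preceq$ being isomorphism-invariant --- the same nonempty set of $\preceq$-maximal card classes) shows that $G_1$ and $G_2$ are output as children of one and the same nonempty set of parents.

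The main obstacle is not a hard estimate but keeping the bookkeeping straight: one must notice that (A$'$) is engineered so that isomorphism-invariance, and with it the ``every class is output'' half of part~(a), survives at level $n$, while the single-orbit property --- hence isomorph-freeness --- is intentionally abandoned, so that the several outputs in a class $[H]$ between them exhibit exactly the $\preceq$-maximal cards of $H$ as parents; and one must establish the parent characterisation in both directions, so that the resulting parent set is visibly a function of the reduced deck alone. A minor care-point is to read ``$\preceq$-maximal'' as ``with nothing strictly above it'', so that maximal cards exist even though $\preceq$ need not be antisymmetric.
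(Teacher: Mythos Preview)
Your proposal is correct and follows essentially the same approach as the paper. The paper disposes of part~(a) by citing the canonical construction path method from~\cite{McKay98}, whereas you spell out the existence/uniqueness induction; and for part~(b) the paper gives only a short argument (pick one $\preceq$-maximal card, show both $G_1$ and $G_2$ appear as children of its representative), while you go further and explicitly characterise the full parent set of any $H\in\C_n$ as $\{Q_\lambda:\lambda\text{ a }\preceq\text{-maximal card class of }H\}$, which makes the ``same set of parents'' conclusion immediate from equality of reduced decks. The underlying ideas are identical.
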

\begin{proof}
Part (a) is proved in~\cite{McKay98}. This is the \textit{canonical construction
path} method which has been widely adopted for isomorph-free generation.

In part (b), it is no longer true that $G_1$ and $G_2$ will be
(up to isomorphism) output only once.  However,  as we will show,
both will be output at least once, and from the same set of parents. 
Let $G_1{-}v$ be a card of $G_1$ maximal under $\preceq$.
Since $G_2$ has the same reduced deck as $G_1$, there is
a card $G_2{-}w$ maximal under $\preceq$ and isomorphic
to $G_1{-}v$.
By part (a), the call \texttt{generate}$(G',n{-}1)$ is made for some
$G'$ isomorphic to~$G_1{-}v$ and $G_2{-}w$. During that call we construct
(up to to isomorphism) all 1-vertex extensions of $G'$ that lie in $\C_n$,
so in particular some
$H_1=G'[W_1]v$ isomorphic to~$G_1$ and $H_2=G'[W_2]w$ isomorphic
to~$G_2$. Since they pass the tests $v\in m(H_1)$ and $w\in m(H_2)$,
the calls \texttt{generate}$(H_1,n)$ and \texttt{generate}$(H_2,n)$ are both
made, causing $H_1$ and $H_2$ to be output.
\end{proof}

The great advantage of this method is that most parents only have
a small number of children even if the total number of graphs is huge.
So detailed comparison of reduced decks can be carried out
in small batches without the need to store many graphs at once.

Our code is based on the implementation \texttt{geng} of algorithm
\texttt{generate} in the author's package \texttt{nauty}~\cite{McKay14}.
For $\preceq$ we used a hash code based on the number of edges
and triangles in the cards.  For large sizes, most graphs have
trivial automorphism groups and the hash code distinguishes between
cards quite well on average, so the total number of graphs constructed is not much greater
than the number of isomorphism classes.
After collecting the children of each parent, we compute an invariant
of the reduced decks based on the degree sequences of the cards,
and then reject any child which is unique.
For those remaining, we do a complete isomorphism check of the
cards with the most edges, and for any still not distinguished a
complete isomorphism check of all the cards.

As an example, there are 1,018,997,864 graphs with 11 vertices.
The testing program made 1,131,624,582, an increase of only 11\%.
The time for testing was only 2.4 times the generation time.

Theorem~\ref{generate} refers to detection of nonreconstructible  
graphs within a class~$\C$, so it is important to know when membership
of the class is determined by the reduced deck.
This eliminates the possibility that a graph in $\C$ has the same
reduced deck as a graph not in~$\C$.

\begin{lemma}\label{properties}
 Let $G_1$ and $G_2$ be graphs on $n\ge 4$ vertices with the same reduced decks.
 Then the following are true.\\
 (a) $G_1$ and $G_2$ have the same minimum and maximum degrees.\\
 (b) For $3\le k<n$, either both or neither $G_1$ and $G_2$ contain a $k$-cycle.\\
 (c) Either both or neither $G_1$ and $G_2$ are bipartite.
\end{lemma}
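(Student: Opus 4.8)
\emph{Part (b)} is essentially immediate. If $G$ contains a $k$-cycle with $3\le k<n$, then some vertex $v$ lies off that cycle, so the card $G{-}v$ contains a $k$-cycle; conversely, a $k$-cycle in any card $G{-}v$ is already a $k$-cycle of $G$. Hence ``$G$ contains a $k$-cycle'' is equivalent to ``some card in the reduced deck contains a $k$-cycle'', a property of $\deck(G)$ alone, so $G_1$ and $G_2$ agree on it.

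\emph{Part (c)} I would reduce to (b). Suppose $G_2$ is non-bipartite and let $\ell$ be its odd girth, so $3\le\ell\le n$. If $\ell<n$, delete a vertex not on a shortest odd cycle: the resulting card is non-bipartite, and since that card also lies in $\deck(G_1)$, some card of $G_1$ is non-bipartite, whence $G_1$ is non-bipartite. If $\ell=n$, then $G_2$ has a Hamiltonian cycle of odd length and no shorter odd cycle; any chord would split this cycle into two shorter cycles whose lengths sum to $n+2$ and each have at least two edges, so one of them is odd and of length at most $n-1<\ell$ --- a contradiction. Hence $G_2=C_n$ and $\deck(G_2)=\{P_{n-1}\}$. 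Then every card of $G_1$ is isomorphic to $P_{n-1}$, so $e(G_1{-}v)=e(G_1)-\deg_{G_1}(v)=n-2$ for every $v$; thus $G_1$ is $d$-regular with $2e(G_1)=nd$ and $d=e(G_1)-(n-2)$, which forces $d=2$, and since every card is connected, $G_1$ is a single cycle $C_n$ --- contradicting that $G_1$ is bipartite.

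\emph{Part (a).} Write $\delta(G),\Delta(G)$ for the minimum and maximum degree. Three quantities can be read off $\deck(G)$: the largest and smallest number of edges of a card, equal to $e(G)-\delta(G)$ and $e(G)-\Delta(G)$ since $e(G{-}v)=e(G)-\deg_G(v)$; and the smallest minimum degree occurring among the cards, which is $\delta(G)-1$ when $\delta(G)\ge 1$ (delete a neighbour of a minimum-degree vertex; no card can do better) and is $0$ when $\delta(G)=0$. As these agree for $G_1,G_2$, one gets $\delta(G_1)=\delta(G_2)$ unless $\{\delta(G_1),\delta(G_2)\}=\{0,1\}$, and --- applying the same reasoning to complements, whose reduced deck is obtained by complementing every card --- $\Delta(G_1)=\Delta(G_2)$ unless $\{\Delta(G_1),\Delta(G_2)\}=\{n-2,n-1\}$. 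It remains to exclude $\delta(G_1)=0,\ \delta(G_2)=1$. The edge-count relations then force $e(G_2)=e(G_1)+1$, $\Delta(G_1)=n-2$, $\Delta(G_2)=n-1$; hence $G_1=H\sqcup K_1$ where $H$ has a universal vertex, $H$ (i.e.\ the card got by deleting the isolated vertex) is the \emph{unique} card of $G_1$ with the most edges, and therefore $G_2{-}v\cong H$ for every degree-$1$ vertex $v$ of $G_2$; tracking the universal vertex pins $G_2$ down as $H$ with a pendant attached at a universal vertex of $H$. A short case split on whether $H$ has a vertex of degree $1$ finishes it: if it does, $\deck(G_1)$ contains a card with exactly one isolated vertex while $\deck(G_2)$ does not; if it does not, then (a graph all of whose cards are isomorphic is regular, and $H$ has a universal vertex) $H=K_{n-1}$, and the reduced decks of $G_1=K_{n-1}\sqcup K_1$ and of $G_2$, which is $K_{n-1}$ with a pendant vertex, are visibly unequal. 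Either way $\deck(G_1)\ne\deck(G_2)$.

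The one genuine obstacle is this last step --- excluding $\delta(G_1)=0,\ \delta(G_2)=1$, equivalently resolving the ambiguity $\Delta\in\{n-2,n-1\}$; parts (b), (c) and the ``generic'' part of (a) are routine. An alternative for (a) is to quote the known fact that the number of edges of a graph on $n\ge 4$ vertices is determined by its reduced deck, after which $\delta$ and $\Delta$ follow at once from the two edge-count relations above.
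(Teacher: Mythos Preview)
Your arguments for (b) and (c) are correct and match the paper's: the paper likewise notes that cycles of length less than $n$ are exactly those visible in the cards, and handles (c) by observing that a non-bipartite graph either has a short odd cycle or is itself $C_n$, the latter being characterised by the reduced deck $\{P_{n-1}\}$.

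For (a) the paper takes a different route: it simply cites Manvel~\cite{Manvel69,Manvel76}, whose result is precisely the ``alternative'' you mention at the end (the edge count is set-reconstructible for $n\ge 4$, after which $\delta$ and $\Delta$ drop out of the maximum and minimum card edge-counts). Your self-contained argument is a legitimate substitute, but one step is left unjustified. In your case~2 (``$H$ has no degree-$1$ vertex'') you jump to $H=K_{n-1}$ via ``a graph all of whose cards are isomorphic is regular'', without saying why all cards of $H$ are isomorphic. The missing observation is this: the cards of $G_1=H\sqcup K_1$ that possess an isolated vertex are exactly the graphs $(H{-}v)\sqcup K_1$ for $v\in V(H)$, whereas in $G_2$ the only card with an isolated vertex is $(H{-}u)\sqcup K_1$, since deleting any vertex other than $u$ leaves the pendant and every surviving $H$-vertex adjacent to~$u$. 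Equality of reduced decks then forces $H{-}v\cong H{-}u$ for all~$v$, giving the hypothesis you need. With that line added your proof of (a) stands; the cost relative to the paper's one-line citation is length.
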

\begin{proof}
  Part~(a) was proved by Manvel~\cite{Manvel69,Manvel76}.
  Part~(b) is obvious as the cycles of length less than $n$ are those
  appearing in the cards.
  For part~(c), note that a non-bipartite graph $G$ with $n$ vertices either
  has an odd cycle of length less than~$n$ or
  $G$ is an $n$-cycle.  The latter situation is uniquely characterised
  by the reduced deck being a single path.
\end{proof}

\section{Results}\label{s:Results}

\begin{thm}\label{graphs}
 For at least 4 vertices, all graphs in the following classes are
 reconstructible   from their reduced decks (and therefore reconstructible).
 \begin{itemize}\itemsep=0pt
    \item[(a)] Graphs with at most 13 vertices.
    \item[(b)] Graphs with no triangles and at most 16 vertices.
    \item[(c)] Graphs of girth at least 5 and at most 20 vertices.
    \item[(d)] Graphs with no $4$-cycles and at most 19 vertices.
    \item[(e)] Bipartite graphs with at most 17 vertices.
    \item[(f)] Bipartite graphs of girth at least 6 and at most 24 vertices.
    \item[(g)] Graphs with maximum degree at most 3 and at most
     22 vertices.
    \item[(h)] Graphs with degrees in the range $[\delta,\varDelta]$
      and at most $n$ vertices, where $(\delta,\varDelta;n)$ is
      $(0,5;14)$, $(5,6;14)$, $(6,7;14)$, $(0,4;15)$, $(4,5;15)$
      or $(3,4;16)$.
 \end{itemize}
\end{thm}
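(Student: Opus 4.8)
The plan is to verify Theorem~\ref{graphs} by exhaustive computer search, using Theorem~\ref{generate}(b) as the engine and Lemma~\ref{properties} to make sure each class~$\C$ in the list is itself determined by the reduced deck. First I would check that each of the classes (a)--(h) is closed under isomorphism and under taking induced subgraphs; this is immediate in every case, since removing a vertex cannot create a triangle, a short cycle, an odd cycle, or a vertex of too-high degree, and cannot destroy bipartiteness. So Theorem~\ref{generate} applies with this~$\C$. Next, for each class I would confirm via Lemma~\ref{properties} that no graph \emph{in}~$\C$ can share a reduced deck with a graph \emph{outside}~$\C$: for the triangle-free and girth classes this follows from part~(b) (a $k$-cycle for $3\le k<n$ is visible in the deck, and for classes like (c),(d),(f) the only extra worry is an $n$-cycle or $n$-vertex short-cycle configuration, which either lies in the class anyway or is pinned down because the reduced deck is a single path); for the bipartite classes from part~(c); and for class~(h) from part~(a), since the degree range is bounded by the minimum and maximum degrees plus the fact that intermediate degrees below $n-1$ appear in cards. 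Thus a nonreconstructible pair inside~$\C_n$ would be a genuine counterexample.

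Then the core step: for each $(\C,n)$ in the list, run \texttt{generate}$(K_1,n)$ with $m$ chosen to satisfy (A),(B) below size~$n$ and (A$'$) at size~$n$ (using the edge/triangle-count hash code for~$\preceq$, as described after the theorem). By Theorem~\ref{generate}(b), any two non-isomorphic $G_1,G_2\in\C_n$ with the same reduced deck are output as children of a common parent~$G'$; so it suffices, for every parent, to collect its children, bucket them by the cheap degree-sequence-of-cards invariant, discard singletons, and then do full card-isomorphism comparisons within each surviving bucket — first on the maximum-edge cards, then on all cards — checking that no two children actually have equal reduced decks. If every batch passes, the class is reconstructible at that~$n$, and hence (since subgraph-closure lets smaller~$n$ be handled the same way, or a fortiori) for all sizes from~$4$ up to the stated bound. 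The restricted classes are precisely the ones sparse enough that the search reaches $16$--$24$ vertices within feasible time and memory.

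The main obstacle is not mathematical but computational: for class~(a) at $n=13$ there are over $5\times10^{13}$ graphs, so the run must be organized so that only one parent's children are resident at a time (the point emphasized after Theorem~\ref{generate}), the generation must be parallelized over a large cluster, and the per-batch invariant must be strong enough that the expensive full-isomorphism fallback is invoked only very rarely. I would report the overhead figures (number of graphs constructed versus number of isomorphism classes, and testing time versus generation time) as the evidence that the search was in fact completed, mirroring the $n=11$ data already quoted. A secondary point to be careful about is the boundary cases $n=4,5$ where the small-graph exceptions to set reconstruction live; these I would simply check directly and separately.
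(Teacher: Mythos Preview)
Your proposal is correct and follows essentially the same approach as the paper: the theorem is established by exhaustive computer search using the generation algorithm of Theorem~\ref{generate}(b), with Lemma~\ref{properties} invoked to ensure each class is itself reduced-deck-recognisable, and with the same pipeline of hash-based~$\preceq$, degree-sequence card invariants, and staged isomorphism checks on children of each parent. The paper gives no formal proof beyond reporting that the search was carried out (over $6\times10^{13}$ graphs, about 1.5 cpu-years), which is exactly what your plan culminates in.
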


This theorem required testing of more than $6\times 10^{13}$ graphs
and took about 1.5 years on Intel cpus running at approximately 3\,GHz.

\section{Directed graphs}\label{s:Digraphs}

The reconstruction problem is defined for directed graphs in the same
way as for graphs, but in this case many counterexamples are known.
Particular attention has been paid to the case of tournaments.
Obviously, for 3 or more vertices, the reduced deck is enough
to determine if a digraph is a tournament.
To the best of our knowledge, no previous work has been done
on reconstruction of digraphs from reduced decks.

Harary and Palmer~\cite{Harary67} stated the problem and gave 
tournament counterexamples
with 3 or 4 vertices, while Beineke and Parker~\cite{Beineke70} gave
one tournament counterexample with 5 vertices and three with 6 vertices.
Two pairs of nonreconstructible  tournaments of order~8 were found by
Stockmeyer in 1975~\cite{Stockmeyer75}.

The real breakthrough came in 1977 when Stockmeyer published constructions
of nonreconstructible  tournaments on all orders $2^t+1$ or $2^t+2$ for
$t\ge 2$~\cite{Stockmeyer77} (see also Kocay~\cite{Kocay85}).  
Stockmeyer later extended this result to all orders $2^s+2^t$ for
$0\le s<t$ and included families of non-tournament digraphs~\cite{Stockmeyer81}.
Namely, for each such order there are six pairs of nonreconstructible  digraphs,
including a pair of tournaments. Applying the same construction when $s=t$
gives three pairs of digraphs but no tournaments. That gap was filled
by Stockmeyer with a pair of nonreconstructible  tournaments for each
order~$2^t$ for $t\ge 2$~\cite{Stockmeyer82}.

In 1988, Stockmeyer presented some additional small nonreconstructible 
digraphs, including an extra pair of tournaments of order 6 that everyone
had hitherto overlooked~\cite{Stockmeyer88}.

The infinite families and sporadic examples we have now mentioned make up the
complete set of digraphs currently known to be not reconstructible  from their full
decks.  Finding more would of course be very interesting.  We next
describe the searches that have been made.
\begin{itemize}\itemsep=0pt
  \item[(a)] Beineke and Parker~\cite{Beineke70} searched all the tournaments
     up to order~6 by hand but missed one nonreconstructible  pair.
  \item[(b)] Stockmeyer~\cite{Stockmeyer75} tested all tournaments with
     7 vertices (finding none) and 8 vertices (finding two pairs).
  \item[(c)] Abrosimov and Dolov~\cite{Abrosimov09} tested all tournaments
     with up to 12 vertices, finding only Stockmeyer's examples.
  \item[(d)] Kocay (unpublished, 2018) tested all tournaments up to
     10 vertices, and some families of digraphs, finding only Stockmeyer's examples.
   \item[(e)] For the current project, we tested all tournaments up to
     13 vertices, all digraphs up to 8 vertices, and all digraphs on 9 vertices
     which have no 2-cycles. We also tested all semi-regular tournaments
     (those whose scores are 6 or 7) on 14 vertices. For reconstruction
     from full decks, we found no new examples. For reduced decks,
     see below.
   \item[(f)] Stockmeyer's tournaments of order $2^t+1$ are self-complementary.
    We checked that the 872,687,552 self-complementary tournaments on
    14 vertices have unique reduced decks. Note that this does not preclude
    the possibility that a self-complementary tournament has the same reduced
    deck as one that is not self-complementary.
\end{itemize}

Searches (a)--(d) used the full deck, while (e)--(f) used the reduced deck.
There are 48,542,114,686,912 tournaments on 13 vertices,
276,013,571,133 semi-regular tournaments on 14 vertices,
1,793,359,192,848 digraphs on 8 vertices, and 
415,939,243,032 2-cycle-free digraphs on 9 vertices.  The searches
described in (e) took about 4 years on Intel cpus at approximately 3\,GHz.

\begin{figure}[t!]
%
%
\[
    \includegraphics[scale=0.7]{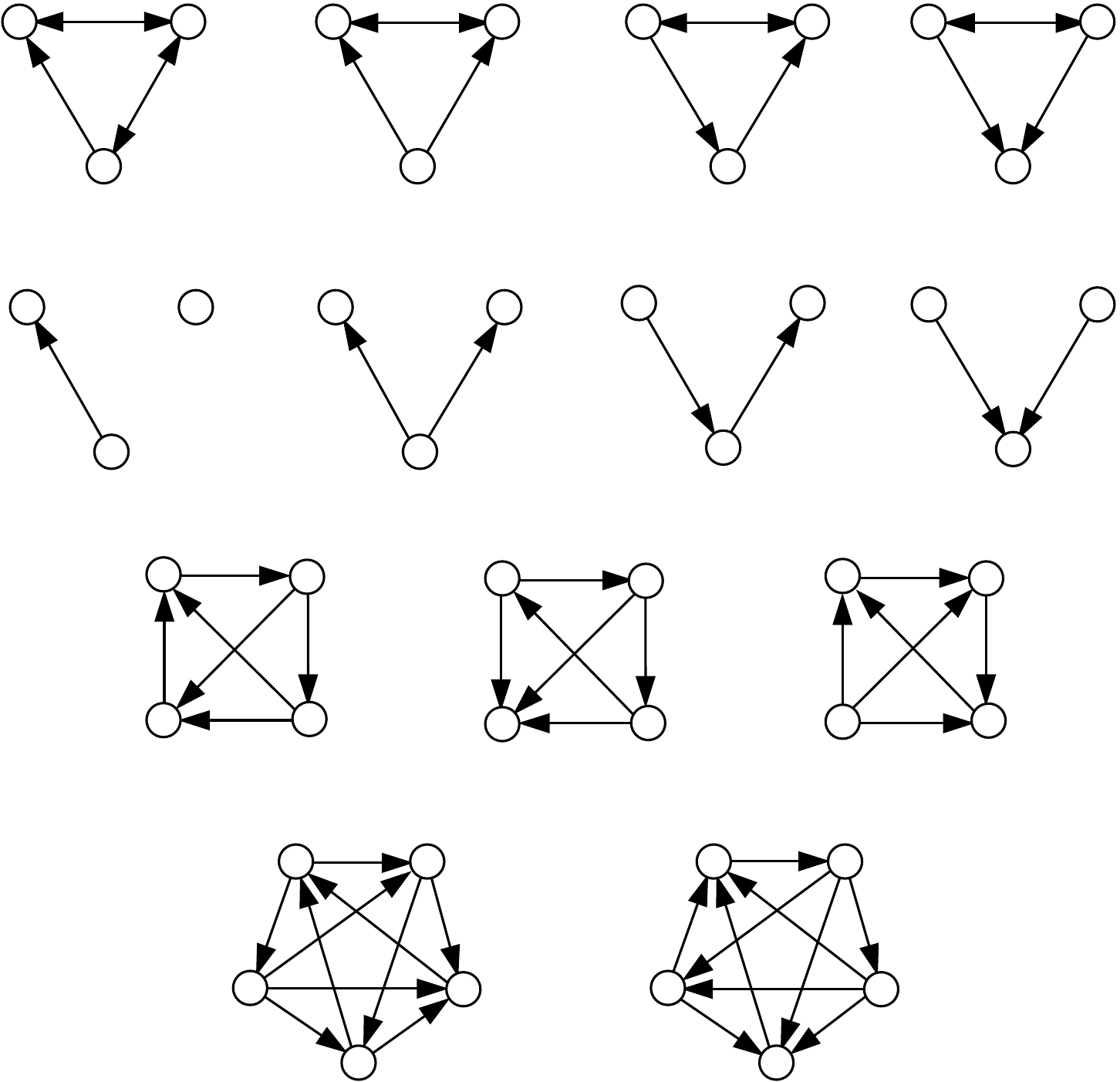}
\]
 \caption{Four sets of digraphs with the same reduced deck.}
\label{fig:tournaments1}
\end{figure}
 
 \begin{figure}[t!]
 \[
   \tabcolsep=0.44em
   \begin{tabular}{c|ccccccc}
 a & 0 & 0 & 1 & 0 & 0 & 1 & 0 \\
 b  & 1 & 0 & 1 & 0 & 0 & 0 & 0 \\
 c & 0 & 0 & 0 & 1 & 0 & 1 & 1 \\
  b & 1 & 1 & 0 & 0 & 1 & 0 & 0 \\
  b & 1 & 1 & 1 & 0 & 0 & 0 & 0 \\
 d & 0 & 1 & 0 & 1 & 1 & 0 & 1 \\
  b & 1 & 1 & 0 & 1 & 1 & 0 & 0
   \end{tabular}
   \qquad\qquad
   \begin{tabular}{c|ccccccc}
b & 0 & 1 & 1 & 0 & 0 & 0 & 0 \\
d & 0 & 0 & 1 & 0 & 0 & 0 & 1 \\
c & 0 & 0 & 0 & 0 & 0 & 1 & 1 \\
b & 1 & 1 & 1 & 0 & 0 & 0 & 0 \\
b & 1 & 1 & 1 & 1 & 0 & 0 & 0 \\
a & 1 & 1 & 0 & 1 & 1 & 0 & 0 \\
c & 1 & 0 & 0 & 1 & 1 & 1 & 0
   \end{tabular}
 \]

  \[
   \tabcolsep=0.44em
   \begin{tabular}{c|ccccccc}
d & 0 & 0 & 0 & 0 & 1 & 1 & 0 \\
b & 1 & 0 & 1 & 0 & 1 & 0 & 0 \\
e & 1 & 0 & 0 & 1 & 0 & 1 & 0 \\
d & 1 & 1 & 0 & 0 & 0 & 1 & 0 \\
f & 0 & 0 & 1 & 1 & 0 & 0 & 1 \\
g & 0 & 1 & 0 & 0 & 1 & 0 & 1 \\
e & 1 & 1 & 1 & 1 & 0 & 0 & 0
  \end{tabular}
   \qquad\qquad
   \begin{tabular}{c|ccccccc}
f & 0 & 0 & 0 & 1 & 0 & 1 & 0 \\
e & 1 & 0 & 1 & 0 & 0 & 0 & 0 \\
f & 1 & 0 & 0 & 0 & 1 & 1 & 0 \\
g & 0 & 1 & 1 & 0 & 0 & 0 & 1 \\
b & 1 & 1 & 0 & 1 & 0 & 0 & 0 \\
g & 0 & 1 & 0 & 1 & 1 & 0 & 1 \\
d & 1 & 1 & 1 & 0 & 1 & 0 & 0
   \end{tabular}
 \]

  \[
   \tabcolsep=0.44em
   \begin{tabular}{c|ccccccc}
h & 0 & 0 & 1 & 0 & 0 & 0 & 0 \\
h & 1 & 0 & 0 & 1 & 0 & 0 & 0 \\
i & 0 & 1 & 0 & 1 & 1 & 0 & 0 \\
j & 1 & 0 & 0 & 0 & 1 & 1 & 0 \\
k & 1 & 1 & 0 & 0 & 0 & 0 & 1 \\
l & 1 & 1 & 1 & 0 & 1 & 0 & 0 \\
l & 1 & 1 & 1 & 1 & 0 & 1 & 0
  \end{tabular}
   \qquad\qquad
   \begin{tabular}{c|ccccccc}
l & 0 & 1 & 0 & 0 & 0 & 0 & 0 \\
i & 0 & 0 & 1 & 1 & 0 & 0 & 0 \\
j & 1 & 0 & 0 & 0 & 1 & 0 & 0 \\
j & 1 & 0 & 1 & 0 & 0 & 1 & 0 \\
j & 1 & 1 & 0 & 1 & 0 & 1 & 0 \\
k & 1 & 1 & 1 & 0 & 0 & 0 & 1 \\
h & 1 & 1 & 1 & 1 & 1 & 0 & 0
   \end{tabular}
 \]
 
 \medskip

\caption{Three pairs of tournaments with the same reduced deck.
 The letters indicate card type.}
\label{fig:tournaments2}
\end{figure}

For convenience we summarize the digraphs for three or more vertices
that are not reconstructible .  When we refer to a ``pair'', ``triple'', etc., we
mean a set of non-isomorphic digraphs with the same deck.
We use ``oriented graph'' to mean a non-tournament digraph with no 2-cycles.

\nicebreak
\begin{description}\itemsep=0pt
  \item{\textbf{Reconstruction from full deck:}}
  \item{3 vertices: } One pair of tournaments, one triple of oriented graphs,
    one pair and one triple of digraphs with one 2-cycle.
  \item{4 vertices: } One pair of tournaments, two pairs of oriented graphs,
    two pairs of digraphs with one 2-cycle, three pairs of
    digraphs with two 2-cycles.
  \item{5 vertices: } One pair of tournaments, three pairs of oriented graphs,
     two pairs of digraphs with one 2-cycle,  three pairs of digraphs with
     two 2-cycles.
  \item{6 vertices: } Four pairs of tournaments, two pairs of oriented graphs,
     three pairs of digraphs with four 2-cycles.
  \item{7 vertices: } All digraphs are reconstructible .
  \item{8 vertices: } Two pairs of tournaments, one pair of oriented graphs,
     two pairs of digraphs with eight 2-cycles.
  \item{9 or more vertices: } From this point on, only the infinite families
    found by Stockmeyer are known. For tournaments this is
    the full set up to 13 vertices. For oriented graphs it is
    the full set on 9 vertices.
  \item{\textbf{Reconstruction from reduced deck (excluding those above):}}
  \item{3 vertices: } For each of the two triples of digraphs not reconstructible 
    from their full decks, there is an extra digraph having the same reduced deck.
  \item{4 vertices: } There is a tournament having the same reduced deck as
    the two tournaments with the same full deck.
  \item{5 vertices: } A pair of tournaments with the same reduced deck.
  \item{6 vertices: } No further examples.
  \item{7 vertices: }  Three pairs of tournaments.
  \item{8 or more vertices: } From this point on, no digraphs are known that
    are reconstructible   from their full decks but not from their reduced decks.
    The search is complete for all the classes mentioned in~(e) above.
\end{description}

The known sets of digraphs with the same reduced deck but not the
same full deck are shown in Figures~\ref{fig:tournaments1}
and~\ref{fig:tournaments2}.
All of the digraphs mentioned here can be found
at~\cite{McKayDigraphs}.

\section{Partially-ordered sets}

Graph reconstruction problems are special cases of reconstruction problems
for binary relations. See Rampon~\cite{Rampon05} for a survey.  In this paper,
the only non-graph reconstruction problem we will mention is for
partially-ordered sets (posets).
Note that removing a point from a poset is the same as removing a vertex
from the corresponding transitive digraph, but not the same as removing
a vertex from the Hasse diagram.

The reader can check that all of the posets on 2 points, and three of the five
posets on 3 points, have the same reduced deck. For 4--13 points, we
have checked that every poset has a unique reduced deck (amongst posets).
The number of posets with 13 points is 33,823,827,452.
This was a quick computation of about 2 weeks that used the generator
described in~\cite{Brinkmann02}
to make all the reduced decks directly. The method of Section~\ref{s:Graphs}
would enable the computation to continue up to 15 points, but we leave
this for another time and another place (those who knew Paul Erd\H{o}s
will understand this allusion).

\section{Acknowledgements}

This research/project was undertaken with the assistance of resources and
services from the National Computational Infrastructure (NCI), which is
supported by the Australian Government.
We thank Paul Stockmeyer for providing~\cite{Stockmeyer82}.


\begin{thebibliography}{99}
\itemsep=0pt

\bibitem{Abrosimov09}
M.\,B.~Abrosimov and A.\,A.~Dolgov,
About reconstruction of small tournaments,
\textit{Izv. Saratov Univ. (N.S.), Ser. Math. Mech. Inform.},
\textbf{9} (2009) 94--98.

\bibitem{Baker74}
H.\,H.~Baker, A.\,K.~Dewdney and A.\,L.~Szilard,
Generating the nine-point graphs,
\textit{Math. Comp.}, \textbf{28} (1974) 833--838.

\bibitem{Beineke70}
L.\,W. Beineke and E.\,T. Parker, On non-reconstructible tournaments,
\textit{J.~Combinatorial Theory}, \textbf{9} (1970), 324--326. 

\bibitem{Brinkmann02}
G.~Brinkmann and B.\,D.~McKay, Posets on up to 16 points,
\textit{Order}, \textbf{19} (2002) 147--179.

\bibitem{Harary64}
F.~Harary, On the reconstruction of a graph from a collection of
subgraphs, \textit{in} Theory of Graphs and its Applications
(ed. M.~Fiedler), Academic Press, 1964, 47--52.

\bibitem{Harary67}
F.~Harary and E.~Palmer, On the problem of reconstructing a
tournament from subtournaments, \textit{Monatsh. Math.},
\textbf{71} (1967) 14--23.

\bibitem{Kocay85}
W.\,L.~Kocay, On Stockmeyer's non-reconstructible tournaments,
\textit{J.~Graph Theory}, \textbf{9} (1985) 473--476.

\bibitem{Lauri13}
J.~Lauri, The reconstruction problem, \textit{in}
Handbook of Graph Theory, 2nd edn (eds. J.\,L.~Gross, J.~Yellen
and P.~Zhang), CRC Press, 2013, 77--99.

\bibitem{Manvel69}
B.~Manvel, On reconstruction of graphs, \textit{in}
The Many Facets of Graph Theory (eds. G.~Chartrand and S.\,F.~Kapoor),
Springer-Verlag, 1969, 207--214.

\bibitem{Manvel76}
B.~Manvel, On reconstructing graphs from their sets of subgraphs,
\textit{J.~Combin. Th. Ser.~B}, \textbf{21} (1976) 156--165.

\bibitem{McKay77}
B.\,D.~McKay, Computer reconstruction of small graphs,
\textit{J.~Graph Theory}, \textbf{1} (1977) 281--283.

\bibitem{McKay97}
B.\,D.~McKay, Small graphs are reconstructible, \textit{Australas.
 J.~Combin.}, \textbf{15} (1997) 123--126.
 
 \bibitem{McKay98}
 B.\,D.~McKay, Isomorph-free exhaustive generation,
 \textit{J.~Algorithms}, \textbf{26} (1998) 306--324.
 
\bibitem{McKayDigraphs}
 B.\,D.\,McKay, Digraphs (internet resource),
 \url{https://users.cecs.anu.edu.au/~bdm/data/digraphs.html}

\bibitem{McKay14}
B.\,D.~McKay and A.~Piperno, Practical graph isomorphism, II,
  \textit{J.~Symbolic Comput.}, \textbf{60} (2014) 94--112.
  Software at \url{https://users.cecs.anu.edu.au/~bdm/nauty/}

\bibitem{Rampon05}
J.-X.~Rampon, What is reconstruction for ordered sets?,
\textit{Discrete Math.}, \textbf{291} (2005) 191--233.

\bibitem{Stockmeyer75}
P.\,K.~Stockmeyer, The reconstruction conjecture for tournaments, 
\textit{in} Proceedings, Sixth Southeastern Conference on Combinatorics,
Graph Theory and Computing (eds. F. Hoffman et al.), Utilitas Mathematica,
1975, pp. 561--566.

\bibitem{Stockmeyer77}
P.\,K.~Stockmeyer, The falsity of the reconstruction conjecture
for tournaments, \textit{J.~Graph Theory}, \textbf{1} (1977) 19--25.

\bibitem{Stockmeyer81}
P.\,K.~Stockmeyer, A census of non-reconstructable digraphs, I:
six related families, \textit{J.~Combin.Th. Ser. B}, \textbf{31}
(1981) 232--239.

\bibitem{Stockmeyer82}
P.\,K.~Stockmeyer, A census of non-reconstructable digraphs, II:
a family of tournaments, preprint 1982.
\url{https://arxiv.org/abs/2110.06158}

\bibitem{Stockmeyer88}
P.\,K.~Stockmeyer, My quest for non-reconstructable graphs, \textit{in}
250th Anniversary Conference on Graph Theory (Fort Wayne, 1986),
\textit{Congr. Numer.}, \textbf{63} (1988) 188--200.

\bibitem{Stolee11}
D.~Stolee, Isomorph-free generation of 2-connected graphs with applications,
preprint 2011. \url{https://arxiv.org/abs/1104.5261}

\end{thebibliography}
\end{document}